\numberwithin{equation}{section}
\newtheorem{theorem}{Theorem}[section]
\newtheorem{lemma}[theorem]{Lemma}
\title{Local Lie derivations on von Neumann algebras and algebras of locally measurable operators}
\author{\begin{tabular}{c} Jun He$^{1}$ and Guangyu An$^{2}$\footnote{Corresponding author.
E-mail address: anguangyu310@163.com}
\\{\small\it  $^{1}$Department of Mathematics, Anhui Polytechnic University}\\
{\small\it Wuhu 241000, China}
\\{\small\it $^{2}$Department of Mathematics, Shaanxi University of Science and Technology}\\
{\small\it Xi'an 710021, China}
\end{tabular}}
\date{}
\begin{document}
\maketitle \abstract
Let $\mathcal{A}$ be a unital associative algebra and $\mathcal{M}$ be an $\mathcal{A}$-bimodule.
A linear mapping $\varphi$ from $\mathcal{A}$ into an $\mathcal{A}$-bimodule $\mathcal{M}$ is called a Lie derivation if
$\varphi[A,B]=[\varphi(A),B]+[A,\varphi(B)]$ for each $A,B$ in $\mathcal{A}$,
and $\varphi$ is called a \emph{local Lie derivation} if for every $A$ in $\mathcal{A}$,
there exists a Lie derivation $\varphi_{A}$ (depending on $A$) from $\mathcal{A}$ into $\mathcal{M}$
such that $\varphi(A)=\varphi_{A}(A)$. In this paper, we prove that every local Lie derivation on von Neumann algebras is a Lie derivation;
and we show that if $\mathcal M$ is a type I von Neumann algebra with atomic lattice
of projections, then every local Lie derivation on $LS(\mathcal M)$ is a Lie derivation.

\
{\textbf{Keywords:}} Lie derivation, local Lie derivation, von Neumann algebra, locally measurable operator.

\
{\textbf{Mathematics Subject Classification(2010):}} 46L57; 47L35; 46L50

\
\section{Introduction}\

Let $\mathcal{A}$ be a unital associative algebra
over the complex field $\mathbb{C}$ and $\mathcal{M}$ be an $\mathcal{A}$-bimodule.
An linear mapping $\delta$ from $\mathcal{A}$ into $\mathcal{M}$ is
called a \emph{derivation} if $\delta(AB)=\delta(A)B+A\delta(B)$ for each $A$ and $B$ in $\mathcal{A}$.
In particular, a derivation $\delta_M$ defined by $\delta_{M}(A)=MA-AM$ for every $A$ in $\mathcal{A}$ is called an inner derivation,
where $M$ is a fixed element in $\mathcal{M}$.

In \cite{sakai}, S. Sakai proves that every derivation on von Neumann algebras is an inner derivation.
In \cite{christensen}, E. Christensen shows that every derivation on nest algebras on a Hilbert space $\mathcal H$
is an inner derivation.
For more information on derivations and inner derivations, we refer to \cite{DuZhang,DuZhang1,hejun}.

In \cite{R. Kadison,D. Larson}, R. Kadison and D. Larson introduce the concept of local derivations.
A linear mapping $\delta$ from $\mathcal{A}$ into
$\mathcal{M}$ is called a \emph{local derivation} if for every $A$ in $\mathcal{A}$,
there exists a derivation $\delta_{A}$ (depending on $A$) from $\mathcal{A}$ into $\mathcal{M}$
such that $\delta(A)=\delta_{A}(A)$.

In \cite{R. Kadison}, R. Kadison proves that every continuous local
derivation from a von Neumann algebra into its dual Banach
module is a derivation. In \cite{D. Larson}, D. Larson and A. Sourour prove that
if $X$ is a Banach space, then every local derivation on $B(X)$ is a derivation.
In \cite{B. Johnson}, B. Jonson shows that every local derivation from a $C^*$-algebra
into its Banach bimodule is a derivation. In \cite{HadwinLi1, HadwinLi2}, D. Hadwin and J. Li
characterize local derivations on non self-adjoint operator algebras such as nest algebras and CDCSL algebras.

A linear mapping $\varphi$ from $\mathcal{A}$ into an $\mathcal{A}$-bimodule $\mathcal{M}$ is called a Lie derivation if
$\varphi[A,B]=[\varphi(A),B]+[A,\varphi(B)]$ for each $A$ and $B$ in $\mathcal{A}$, where $[A,B]=AB-BA$ is the usual Lie product.
A Lie derivation $\varphi$ is said to be standard if it can be decomposed as $\varphi=\delta+\tau$, where $\delta$ is a derivation from
$\mathcal{A}$ into $\mathcal{M}$ and $\tau$ is a linear mapping from $\mathcal{A}$ into $\mathcal{Z}(\mathcal{A},\mathcal{M})$ such that $\tau[A,B]=0$ for
each $A$ and $B$ in $\mathcal{A}$, where $\mathcal{Z}(\mathcal{A},\mathcal{M})=\{M\in\mathcal{M}:AM=MA~\mbox{for every}~A~\mbox{in}~\mathcal{A}\}$.

In \cite{Johnson1}, B. Johnson proves that every continuous Lie derivation from a $C^{*}$-algebra into its Banach bimodule is standard.
In \cite{M. Mathieu}, M. Mathieu and A. Villena prove that every Lie derivation on a $C^*$-algebra is standard.
In \cite{W. Cheung}, W. Cheung characterizes Lie derivations on triangular algebras. In \cite{F. Lu3}, F. Lu proves that every Lie derivation on a
completely distributed commutative subspace lattice algebra is standard. In \cite{D. Benkovic}, D. Benkovi$\check{c}$ proves that every Lie derivation
on matrix algebra $M_n(\mathcal A)$ is standard, where $n\geq2$ and $\mathcal A$ is a 2-torsion free unital algebra.

Similar to local derivations, In \cite{L. Chen}, L. Chen, F. Lu and T. Wang introduce the concept of local Lie derivations.
A linear mapping $\varphi$ from $\mathcal{A}$ into
$\mathcal{M}$ is called a \emph{local Lie derivation} if for every $A$ in $\mathcal{A}$,
there exists a Lie derivation $\varphi_{A}$ (depending on $A$) from $\mathcal{A}$ into $\mathcal{M}$
such that $\varphi(A)=\varphi_{A}(A)$.

In \cite{L. Chen}, L. Chen, F. Lu and T. Wang prove that every local Lie derivation on $B(X)$ is a Lie derivation, where $X$ is a Banach space of dimension exceeding 2.
In \cite{L. Chen2},  L. Chen and F. Lu prove that every local Lie derivation on nest algebras is a Lie derivation.
In \cite{liudan}, D. Liu and J. Zhang prove that under certain conditions, every local Lie derivation on triangular algebras is a Lie derivation.
In \cite{hejun1}, J. He, J. Li, G. An and W. Huang prove that every local Lie derivation on some algebras such as finite von Neumann algebras, nest algebras, Jiang-Su algebra and UHF algebras
is a Lie derivation.

Compare with the characterizations of derivations on Banach algebras, investigation of
derivations on  unbounded operator algebras begin much later.

In \cite{Segal}, I. Segal studies the theory of noncommutative integration,
and introduces various classes of non-trivial $*$-algebras of unbounded
operators. In this paper, we mainly consider
the $*$-algebra $S(\mathcal M)$ of all measurable operators
and the $*$-algebra $LS(\mathcal M)$ of all locally measurable operators
affiliated with a von Neumann algebra $\mathcal M$.
In \cite{Segal}, I. Segal shows that the algebraic and topological properties
of the measurable operators algebra $S(\mathcal M)$ are similar to the von Neumann algebra $\mathcal M$.
If $\mathcal M$ is a commutative von Neumann algebra,
then $\mathcal M$ is $*$-isomorphic to the algebra
$L^\infty(\Omega,\Sigma,\mu)$ of all essentially bounded measurable
complex functions on a measure space $(\Omega,\Sigma,\mu)$; and
$S(\mathcal M)$ is $*$-isomorphic to the algebra $L^0(\Omega,\Sigma,\mu)$
of all measurable almost everywhere finite complex-valued functions on
$(\Omega,\Sigma,\mu)$. In \cite{Ber1}, A. Ber, V. Chilin and F. Sukochev show that there exists a derivation on $L^0(0,1)$
is not an inner derivation, and the derivation is discontinuous in the measure topology.
This result means that the properties of derivations on $S(\mathcal M)$
are different from the derivations
on $\mathcal M$.

In \cite{Albeverio1, Albeverio2}, Albeverio, Ayupov and Kudaybergenov study
the properties of derivations on various classes of measurable algebras.
If $\mathcal M$ is a type I von Neumann algebra,
in \cite{Albeverio1}, the authors prove that every derivation
on $LS(\mathcal M)$ is an inner derivation if and only if it is $\mathcal Z(\mathcal M)$ linear;
in \cite{Albeverio2}, the authors give the decomposition form of derivations on $S(\mathcal M)$
and $LS(\mathcal M)$; they also prove that if $\mathcal M$ is a type $\mathrm{I}_\infty$ von Neumann algebra,
then every derivation on $S(\mathcal M)$ or $LS(\mathcal M)$ is an inner derivation.
If $\mathcal M$ is a properly infinite von Neumann algebra,
in \cite{Ber3}, A. Ber, V. Chilin and F. Sukochev prove that
every derivation on $LS(\mathcal M)$ is continuous with respect
to the local measure topology $t(\mathcal M)$; and in \cite{Ber2}, the authors
show that every derivation on $LS(\mathcal M)$ is an inner derivation.
In \cite{Albeverio3}, S. Albeverio and S. Ayupov give a characterization of local derivations
on $S(\mathcal M)$, where $\mathcal M$ is an abelian von Neumann algebra.
In \cite{D. Hadwin}, D. Hadwin and J. Li prove that if $\mathcal M$ is a von Neumann
algebra without abelian direct summands, then every local derivation
on $LS(\mathcal M)$ or $S(\mathcal M)$ is a derivation.
In \cite{V. Chilin}, V. Chilin and I. Juraev  show that every Lie derivation on
$LS(\mathcal M)$ or $S(\mathcal M)$ is standard.

This paper is organized as follows. In Section 2, we recall the definitions of algebras of measurable operators and local measurable operators.

In Section 3, we generalize the Corollary 3.2 in \cite{hejun1} and prove that every local Lie derivation
on von Neumann algebras is a Lie derivation.

In Section 4, we prove that if $\mathcal M$ is a type I von Neumann algebra with an atomic lattice
of projections, then every local Lie derivation on $LS(\mathcal M)$ is a Lie derivation.

\section{Preliminaries}\

Let $\mathcal H$ be a complex Hilbert space
and $B(\mathcal H)$ be the algebra of all bounded linear operators on $\mathcal H$.
Suppose that $\mathcal{M}$ is a von Neumann algebra on
$\mathcal{H}$ and
$\mathcal Z(\mathcal M)=\mathcal M\cap\mathcal M'$
is the center of $\mathcal M$, where
$$\mathcal M'=\{a\in B(\mathcal H):ab=ba~\mathrm{for~every}~b~\mathrm{in}~\mathcal M\}.$$
Denote by
$\mathcal P(\mathcal M)=\{p\in\mathcal M:p=p^*=p^2\}$
the lattice of all projections in $\mathcal M$ and by $\mathcal P_{fin}(\mathcal M)$
the set of all finite projections in $\mathcal M$. For each $p$ and $q$
in $\mathcal P(\mathcal M)$, if we define the
inclusion relation $p\subset q$ by $p\leq q$, then
$\mathcal P(\mathcal M)$ is a complete lattice. Suppose that
$\{p_l\}_{l\in\lambda}$ is a family of projections in $\mathcal M$, we denote
$${\sup_{{l\in\lambda}}}p_l=\overline{\bigcup_{l\in\lambda} p_l\mathcal H}~~\mathrm{and}~~
{\inf_{{l\in\lambda}}}p_l=\bigcap_{l\in\lambda} p_l\mathcal H.$$
If $\{p_l\}_{l\in\lambda}$
is an orthogonal family of projections in $\mathcal M$, then we have that
$${\sup_{{l\in\lambda}}}p_l=\sum_{l\in\lambda}p_l.$$

Let $x$ be a closed densely defined linear operator on $\mathcal H$
with the domain $\mathcal D(x)$, where $\mathcal D(x)$ is a linear subspace of $\mathcal H$.
$x$ is said to be \emph{affiliated} with $\mathcal M$,
denote by $x\eta\mathcal M$, if
$u^*xu=x$ for every unitary element $u$ in $\mathcal{M}'$.

A linear operator affiliated with $\mathcal M$
is said to be \emph{measurable} with respect to
$\mathcal M$, if there exists a sequence $\{p_n\}_{n=1}^{\infty}\subset\mathcal P(\mathcal M)$ such that $p_n\uparrow1$,
$p_n(\mathcal H)\subset\mathcal D(x)$ and $p_n^{\bot}=1-p_n\in \mathcal P_{fin}(\mathcal M)$ for every $n\in\mathbb{N}$,
where $\mathbb{N}$ is the set of all natural numbers. Denote by $S(\mathcal M)$ the set of all measurable operators
affiliated with the von Neumann algebra $\mathcal M$.

A linear operator affiliated with $\mathcal M$
 is said to be \emph{locally measurable} with respect to
$\mathcal M$, if there exists a sequence $\{z_n\}_{n=1}^{\infty}\subset\mathcal P(\mathcal Z(\mathcal M))$
such that $z_n\uparrow1$ and $z_nx\in S(\mathcal M)$ for every $n\in\mathbb{N}$.
Denote by $LS(\mathcal M)$ the set of all locally measurable operators
affiliated with the von Neumann algebra $\mathcal M$.

In \cite{M. Muratov}, Muratov and Chilin prove that $S(\mathcal M)$ and $LS(\mathcal M)$
are both unital $*$-algebras and $\mathcal M\subset S(\mathcal M)\subset LS(\mathcal M)$;
the authors also show that if $\mathcal M$ is a finite von Neumann algebra
or $\mathrm{dim}(\mathcal Z(\mathcal M))<\infty$, then
$S(\mathcal M)=LS(\mathcal M)$; if $\mathcal M$ is a type III von Neumann algebra and
$\mathrm{dim}(\mathcal Z(\mathcal M))=\infty$, then $S(\mathcal M)=\mathcal M$ and
$LS(\mathcal M)\neq\mathcal M$.

\section{Local Lie derivations on von Neumann algebras}\

In this section, we consider local Lie derivations on von Neumann algebras. To prove
our main theorem, we need the following lemma.

\begin{lemma}\label{01}
Let $\mathcal{A}_1$ and $\mathcal{A}_2$ be two unital algebras and $\mathcal{A}=\mathcal{A}_1\bigoplus\mathcal{A}_2$.
If the following five conditions hold:\\
$(1)$ each Lie derivation on $\mathcal{A}$ is standard;\\
$(2)$ each derivation on $\mathcal{A}$ is inner;\\
$(3)$ each local derivation on $\mathcal{A}$ is a derivation;\\
$(4)$ $\mathcal{Z}(\mathcal{A}_1)\cap [\mathcal{A}_1,\mathcal{A}_1]=\{0\}$;\\
$(5)$ $\mathcal{A}_2=[\mathcal{A}_2,\mathcal{A}_2]$,\\
then every local Lie derivation on $\mathcal{A}$ is a Lie derivation.
\end{lemma}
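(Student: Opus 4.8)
The plan is to show that the local Lie derivation $\varphi$ admits a global decomposition $\varphi = D + \tau$, where $D$ is a derivation and $\tau$ is a linear map into $\mathcal{Z}(\mathcal{A})$ annihilating all commutators; by definition this makes $\varphi$ a standard Lie derivation, and in particular a Lie derivation. First I would record the local structure: for each $A$ fix a Lie derivation $\varphi_A$ with $\varphi(A) = \varphi_A(A)$. Condition $(1)$ writes $\varphi_A = \delta + \tau_A$ with $\delta$ a derivation and $\tau_A$ a central, commutator-annihilating map, and $(2)$ makes $\delta$ inner, so
$$\varphi(A) = \delta_{M_A}(A) + \tau_A(A), \qquad \tau_A(A) \in \mathcal{Z}(\mathcal{A}),$$
for some $M_A \in \mathcal{A}$. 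Writing $A = A_1 + A_2$ with $A_i \in \mathcal{A}_i$, and letting $\pi_i$ be the projection onto $\mathcal{A}_i$, I use that derivations kill the central idempotents and so preserve each summand, whence $\delta_{M_A}(A_2) = \delta_{(M_A)_2}(A_2) \in \mathcal{A}_2$. Condition $(5)$ then disposes of the central part on the infinite summand: every element of $\mathcal{A}_2$ is a sum of commutators from $\mathcal{A}_2$, so $\tau_A$ vanishes on $\mathcal{A}_2$; hence $\varphi(\mathcal{A}_2) \subseteq \mathcal{A}_2$ and $\varphi(A_2) = \delta_{(M_{A_2})_2}(A_2)$.

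Next I would isolate the central map on the finite part. Set $c := \pi_2 \circ \varphi|_{\mathcal{A}_1} \colon \mathcal{A}_1 \to \mathcal{Z}(\mathcal{A}_2)$, which is linear, and define $\tau_1 \colon \mathcal{A}_1 \to \mathcal{Z}(\mathcal{A}_1)$ by declaring $\tau_1(A_1)$ to be the unique element of $\mathcal{Z}(\mathcal{A}_1)$ with $\pi_1(\varphi(A_1)) - \tau_1(A_1) \in [\mathcal{A}_1,\mathcal{A}_1]$. Existence comes from the local decomposition above; uniqueness is exactly condition $(4)$, since two such central witnesses differ by an element of $\mathcal{Z}(\mathcal{A}_1) \cap [\mathcal{A}_1,\mathcal{A}_1] = \{0\}$. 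The hard part will be proving that $\tau_1$ is linear, since a priori the inner witnesses $M_A$ depend on $A$ and cannot simply be summed. I expect to resolve this by passing to the quotient $q \colon \mathcal{A}_1 \to \mathcal{A}_1/[\mathcal{A}_1,\mathcal{A}_1]$: the composite $q \circ \varphi|_{\mathcal{A}_1}$ is linear and, by the defining property of $\tau_1$, lands in $q(\mathcal{Z}(\mathcal{A}_1))$, while $(4)$ says $q$ is injective on $\mathcal{Z}(\mathcal{A}_1)$; thus $\tau_1 = (q|_{\mathcal{Z}(\mathcal{A}_1)})^{-1} \circ q \circ \varphi|_{\mathcal{A}_1}$ is a composite of linear maps. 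Putting $\tau(A) := \tau_1(A_1) + c(A_1)$ yields a linear map into $\mathcal{Z}(\mathcal{A})$. To see $\tau$ kills every commutator of $\mathcal{A}$, I evaluate on $[A_1,B_1]$: because the central part of $\varphi_{[A_1,B_1]}$ dies on commutators, $\varphi([A_1,B_1]) = \delta_M([A_1,B_1]) \in [\mathcal{A}_1,\mathcal{A}_1] \subseteq \mathcal{A}_1$, forcing $c([A_1,B_1]) = 0$ and $\tau_1([A_1,B_1]) = 0$.

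Finally I would set $D := \varphi - \tau$ and check it is a local derivation on $\mathcal{A}$. The uniqueness built into $\tau_1$ gives $\pi_1(\varphi(A_1)) - \tau_1(A_1) = \delta_{(M_{A_1})_1}(A_1)$, and combining with $\varphi(A_2) = \delta_{(M_{A_2})_2}(A_2)$ and the cancellation of the $c(A_1)$ terms yields $D(A) = \delta_{(M_{A_1})_1}(A_1) + \delta_{(M_{A_2})_2}(A_2) = \delta_{N_A}(A)$ for the single element $N_A = (M_{A_1})_1 + (M_{A_2})_2 \in \mathcal{A}$. Thus $D$ is linear and locally inner, i.e.\ a local derivation, so condition $(3)$ upgrades $D$ to a derivation. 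Since $\tau$ is $\mathcal{Z}(\mathcal{A})$-valued and annihilates commutators, $\varphi = D + \tau$ is standard, and in particular $\varphi[A,B] = D[A,B] = [D(A),B] + [A,D(B)] = [\varphi(A),B] + [A,\varphi(B)]$, which is the desired conclusion. In this scheme conditions $(2)$ and $(3)$ supply the inner/derivation structure, $(5)$ removes the central contribution on $\mathcal{A}_2$, and the one genuinely delicate point, the linearity of the central part $\tau_1$, is precisely where $(4)$ is indispensable.
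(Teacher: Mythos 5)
Your proposal is correct and follows essentially the same route as the paper: pointwise decomposition via conditions (1) and (2), vanishing of the central part on $\mathcal{A}_2$ via (5), uniqueness of the central witness on $\mathcal{A}_1$ via (4), and assembly of a pointwise-inner map that condition (3) upgrades to a derivation, so that $\varphi$ is standard. The only real difference is cosmetic: you obtain linearity of the central part by factoring through the quotient $\mathcal{A}_1/[\mathcal{A}_1,\mathcal{A}_1]$ and inverting $q$ on $\mathcal{Z}(\mathcal{A}_1)$, whereas the paper re-runs the uniqueness argument on sums and scalar multiples --- the same idea, packaged a bit more cleanly.
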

\begin{proof}
Denote the units of $\mathcal{A}$, $\mathcal{A}_1$ and $\mathcal{A}_2$ by $I$, $P$ and $Q$, respectively.
For each $A$ in $\mathcal A$, we have that $A=PA+QA=A_1+A_2$, where $A_i\in\mathcal{A}_i,~i=1,2$.

In the following we suppose that $\varphi$ is a local Lie derivation on $\mathcal{A}$.

By the definition of local Lie derivation, we know that for every $A_1$ in $\mathcal{A}_1$,
there exists a Lie derivation $\varphi_{A_1}$ on $\mathcal{A}$ such that $\varphi(A_1)=\varphi_{A_1}(A_1)$.
Since $\varphi_{A_1}$ is standard and each derivation on $\mathcal{A}$ is inner, we can obtain that
$$\varphi(A_1)=\varphi_{A_1}(A_1)=\delta_{A_1}(A_1)+\tau_{A_1}(A_1)=[A_1,T_{A_1}]+P\tau_{A_1}(A_1)+Q\tau_{A_1}(A_1),$$
where $\delta_{A_1}$ is a derivation on $\mathcal{A}$, $T_{A_1}$ is an element in $\mathcal{A}$, and $\tau_{A_1}$ is a linear mapping from
$\mathcal{A}$ into $\mathcal{Z}(\mathcal{A})$ such that $\tau_{A_1}([\mathcal{A},\mathcal{A}])=0$.

It means that $\varphi$ has a decomposition at $A_1$. Next we show that the decomposition at $A_1$
is unique. Assume there is another decomposition at $A_1$, that is
$$\varphi(A_1)=\varphi^{'}_{A_1}(A_1)=\delta^{'}_{A_1}(A_1)+\tau^{'}_{A_1}(A_1)=[A_1,T^{'}_{A_1}]+P\tau^{'}_{A_1}(A_1)+Q\tau^{'}_{A_1}(A_1),$$
where $\delta^{'}_{A_1}$ is a derivation on $\mathcal{A}$, $T^{'}_{A_1}$ is an element in $\mathcal{A}$ and $\tau^{'}_{A_1}$ is a linear mapping from
$\mathcal{A}$ into $\mathcal{Z}(\mathcal{A})$ such that $\tau^{'}_{A_1}([\mathcal{A},\mathcal{A}])=0$.

Then we have that
$$[A_1,T_{A_1}]+P\tau_{A_1}(A_1)+Q\tau_{A_1}(A_1)=[A_1,T^{'}_{A_1}]+P\tau^{'}_{A_1}(A_1)+Q\tau^{'}_{A_1}(A_1).$$
Thus
$$[A_1,T_{A_1}]-[A_1,T^{'}_{A_1}]=P\tau^{'}_{A_1}(A_1)-P\tau_{A_1}(A_1)+Q\tau^{'}_{A_1}(A_1)-Q\tau_{A_1}(A_1).$$

Since $[A_1,T_{A_1}]-[A_1,T^{'}_{A_1}]$ and $P\tau^{'}_{A_1}(A_1)-P\tau_{A_1}(A_1)\in\mathcal{A}_1$ belong to $\mathcal{A}_1$,
and $Q\tau^{'}_{A_1}(A_1)-Q\tau_{A_1}(A_1)$ belongs to $\mathcal{A}_2$,
we have that $Q\tau^{'}_{A_1}(A_1)-Q\tau_{A_1}(A_1)=0$. Moreover, we can obtain that
$$[A_1,T_{A_1}]-[A_1,T^{'}_{A_1}]=[A_1,PT_{A_1}]-[A_1,PT^{'}_{A_1}]\in[\mathcal{A}_1,\mathcal{A}_1],$$
and
$$P\tau^{'}_{A_1}(A_1)-P\tau_{A_1}(A_1)\in\mathcal{Z}(\mathcal{A}_1).$$

By condition (4), it follows that
$[A_1,T_{A_1}]-[A_1,T^{'}_{A_1}]=P\tau^{'}_{A_1}(A_1)-P\tau_{A_1}(A_1)=0.$
It implies that $\delta_{A_1}(A_1)=\delta^{'}_{A_1}(A_1)$ and $\tau_{A_1}(A_1)=\tau^{'}_{A_1}(A_1)$.
Hence the decomposition is unique.

Now we have $\varphi|_{\mathcal{A}_1}=\delta_1+\tau_1$, where $\delta_1$ is a mapping from $\mathcal{A}_1$ into $\mathcal{A}_1$
such that
$\delta_1(A_1)=[A_1,S_{A_1}]$ for some element $S_{A_1}$ in $\mathcal{A}_1$, and $\tau_1$ is a
mapping from $\mathcal{A}_1$ into $\mathcal{Z}(\mathcal{A})$ such that $\tau_1([\mathcal{A}_1,\mathcal{A}_1])=0$.

Next we prove that $\delta_1$ and $\tau_1$ are linear mappings. For each $A_1$ and $B_1$ in $\mathcal{A}_1$, we have that
$$\varphi(A_1)=\delta_1(A_1)+\tau_1(A_1)=[A_1,S_{A_1}]+\tau_1(A_1),$$
$$\varphi(B_1)=\delta_1(B_1)+\tau_1(B_1)=[B_1,S_{B_1}]+\tau_1(B_1),$$
and
$$\varphi(A_1+B_1)=\delta_1(A_1+B_1)+\tau_1(A_1+B_1)=[A_1+B_1,S_{A_1+B_1}]+\tau_1(A_1+B_1).$$
Since $\varphi$ is additive, through a discussion similar to that before, it implies that
$$[A_1+B_1,S_{A_1+B_1}]=[A_1,S_{A_1}]+[B_1,S_{B_1}]$$
and
$$\tau_1(A_1+B_1)=\tau_1(A_1)+\tau_1(B_1).$$
It means that $\delta_1$ and $\tau_1$ are additive mappings. Using the same technique,
we can prove that $\delta_1$ and $\tau_1$ are homogeneous. Hence $\delta_1$ and $\tau_1$ are linear mappings.

For every $A_2$ in $\mathcal{A}_2$, we have that
$$\varphi(A_2)=\varphi_{A_2}(A_2)=\delta_{A_2}(A_2)+\tau_{A_2}(A_2)=[A_2,T_{A_2}]+\tau_{A_2}(A_2),$$
where $\delta_{A_2}$ is a derivation on $\mathcal{A}$, $T_{A_2}$ is an element in $\mathcal{A}$ and $\tau_{A_2}$ is a linear mapping from
$\mathcal{A}$ into $\mathcal{Z}(\mathcal{A})$ such that $\tau_{A_2}([\mathcal{A},\mathcal{A}])=0$.
By condition (5), we have that
$\tau_{A_2}(A_2)=0$. Thus $\varphi(A_2)=[A_2,T_{A_2}]=[A_2,QT_{A_2}].$

Let $\varphi|_{\mathcal{A}_2}=\delta_2$. Then we have $\delta_2(A_2)=[A_2,S_{A_2}]$ for some element $S_{A_2}$ in $\mathcal{A}_2$.
And obviously, $\delta_2$ is linear.

Define two linear mappings as follows:
$$\delta(A)=\delta_1(A_1)+\delta_2(A_2),~~
\tau(A)=\tau_1(A_1),$$
for all $A=A_1+A_2\in\mathcal{A}$.
By the previous discussion, $\tau$ is a linear mapping from
$\mathcal{A}$ into $\mathcal{Z}(\mathcal{A})$ such that $\tau([\mathcal{A},\mathcal{A}])=0$.
In addition,
$$\delta(A)=\delta_1(A_1)+\delta_2(A_2)=[A_1,S_{A_1}]+[A_2,S_{A_2}]=[A_1+A_2,S_{A_1}+S_{A_2}]=[A,S_{A_1}+S_{A_2}].$$
It means that $\delta$ is a local derivation. By condition (3), $\delta$ is a derivation.
Notice that
$$\varphi(A)=\varphi(A_1)+\varphi(A_2)=\delta_1(A_1)+\tau_1(A_1)+\delta_2(A_2)=\delta(A)+\tau(A).$$
Hence $\varphi$ is a standard Lie derivation.
\end{proof}

By Lemma 3.1, we have the following result.

\begin{theorem}\label{02}
Every local Lie derivation on a von Neumann algebra is a Lie derivation.
\end{theorem}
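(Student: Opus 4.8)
The plan is to present an arbitrary von Neumann algebra $\mathcal{M}$ as a direct sum of two unital von Neumann algebras that satisfy the five hypotheses of Lemma~\ref{01}. Structure theory provides a unique central projection $z\in\mathcal{Z}(\mathcal{M})$ for which $\mathcal{A}_1:=z\mathcal{M}$ is finite and $\mathcal{A}_2:=(1-z)\mathcal{M}$ is properly infinite; these are unital von Neumann algebras (with units $z$ and $1-z$, and with either summand possibly zero) and $\mathcal{M}=\mathcal{A}_1\oplus\mathcal{A}_2$. I would then apply Lemma~\ref{01} to this decomposition.

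Conditions $(1)$--$(3)$ need no new argument, because they are properties of the whole algebra $\mathcal{A}=\mathcal{M}$, which is again a von Neumann algebra. Indeed, every Lie derivation on a $C^*$-algebra is standard, every derivation on a von Neumann algebra is inner, and every local derivation from a $C^*$-algebra into a Banach bimodule is a derivation; taking the module to be $\mathcal{M}$ itself yields $(1)$, $(2)$ and $(3)$ at once from the results quoted in the introduction.

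For condition $(4)$ I would exploit the center-valued trace $\tau\colon\mathcal{A}_1\to\mathcal{Z}(\mathcal{A}_1)$ carried by the finite algebra $\mathcal{A}_1$. Because $\tau$ is tracial, $\tau(AB-BA)=0$ for all $A,B\in\mathcal{A}_1$, so $\tau$ annihilates the linear span $[\mathcal{A}_1,\mathcal{A}_1]$; since $\tau$ restricts to the identity on $\mathcal{Z}(\mathcal{A}_1)$, any $Z\in\mathcal{Z}(\mathcal{A}_1)\cap[\mathcal{A}_1,\mathcal{A}_1]$ obeys $Z=\tau(Z)=0$, which is precisely $\mathcal{Z}(\mathcal{A}_1)\cap[\mathcal{A}_1,\mathcal{A}_1]=\{0\}$.

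Condition $(5)$ is the heart of the matter and the step I expect to be the main obstacle: I must show that the properly infinite algebra $\mathcal{A}_2$ equals its own commutator span, $\mathcal{A}_2=[\mathcal{A}_2,\mathcal{A}_2]$. Proper infiniteness of the unit $e:=1-z$ furnishes isometries $V_1,V_2\in\mathcal{A}_2$ with $V_i^*V_i=e$ and $V_1V_1^*+V_2V_2^*=e$, whence $e=[V_1^*,V_1]+[V_2^*,V_2]$ already realizes the unit as a sum of two commutators. Promoting this to an arbitrary $A=eAe\in\mathcal{A}_2$ is the delicate part; I would use the halving $\mathcal{A}_2\cong M_2(\mathcal{A}_2)$ induced by these isometries together with the matrix-unit commutator identities (off-diagonal entries are commutators, and $[E_{12},E_{21}]=E_{11}-E_{22}$) to reduce a general element to a finite sum of commutators, or else invoke the known description of commutators in properly infinite von Neumann algebras. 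Once $(1)$--$(5)$ are verified, Lemma~\ref{01} applies to $\mathcal{M}=\mathcal{A}_1\oplus\mathcal{A}_2$ and yields that every local Lie derivation on $\mathcal{M}$ is a Lie derivation; in the degenerate cases $z=0$ or $z=1$ one of the summands vanishes and the corresponding hypothesis holds trivially.
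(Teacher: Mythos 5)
Your proposal follows essentially the same route as the paper: the same decomposition of $\mathcal{M}$ into a finite summand $\mathcal{A}_1$ and a properly infinite summand $\mathcal{A}_2$, the same verification of conditions (1)--(3) via the quoted results of Mathieu--Villena, Sakai and Johnson, and the same center-valued-trace argument for condition (4). For condition (5) the paper simply cites the known theorem (Sunouchi) that a properly infinite von Neumann algebra equals the linear span of its commutators --- your stated fallback --- which is the safer choice, since your $2\times 2$ halving sketch by itself only reduces a general element to one of the form $b\oplus b$ (a copy of $\mathcal{A}_2$ again) and does not close without a genuinely infinite, Halmos-type argument.
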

\begin{proof}
Let $\mathcal A$ be a von Neumann algebra. It is well known that $\mathcal{A}=\mathcal{A}_1\bigoplus\mathcal{A}_2$,
where $\mathcal{A}_1$ is a finite von Neumann algebra, and $\mathcal{A}_2$ is a proper infinite von Neumann algebra.

By \cite[Theorem 1.1]{M. Mathieu}, we know that every Lie derivation on $\mathcal{A}$ is standard,
by \cite[Theorem 1]{sakai}, we know that every derivation on $\mathcal{A}$ is inner, and by
\cite[Theorem 5.3]{B. Johnson}, we know that every local derivation on $\mathcal{A}$ is a derivation.
Since $\mathcal{A}_2$ is a proper infinite von Neumann algebra, we known that $\mathcal{A}_2=[\mathcal{A}_2,\mathcal{A}_2]$(see in \cite{sunouchi}).

Hence it is sufficient to prove that $\mathcal{Z}(\mathcal{A}_1)\cap [\mathcal{A}_1,\mathcal{A}_1]=\{0\}$.
Since $\mathcal{A}_1$ is finite and by \cite[Theorem 8.2.8]{R.Kadison J. Ringrose}, it follows that there is a center-valued trace $\tau$ on $\mathcal{A}_1$
such that $\tau(Z)=Z$ for every $Z$ in $\mathcal{Z}(\mathcal{A}_1)$ and $\tau([A,B])=0$ for each $A$ and $B$ in $\mathcal{A}_1$.
Suppose that $A\in\mathcal{Z}(\mathcal{A}_1)\cap [\mathcal{A}_1,\mathcal{A}_1]$,
then we have that $\tau(A)=A$ and $\tau(A)=0$. it implies that $A=0$.

By Lemma \ref{01}, we know that every local Lie derivation on a von Neumann algebra is a Lie derivation.
\end{proof}

\section{Local Lie derivations on algebras of locally measurable operators}\

In this section, we mainly consider local Lie derivations on algebras
of all locally measurable operators affiliated with a type I von Neumann algebra.
To prove the main result, we need the following lemmas.

\begin{lemma}\label{03}
Suppose that $\mathcal{A}$ is a commutative unital algebra and $\mathcal{J}=M_n(\mathcal{A})$.
Then $\mathcal{Z}(\mathcal{J})\cap[\mathcal{J},\mathcal{J}]=\{0\}$
\end{lemma}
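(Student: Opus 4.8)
The plan is to compute both the center $\mathcal{Z}(\mathcal{J})$ and the commutator space $[\mathcal{J},\mathcal{J}]$ of $\mathcal{J}=M_n(\mathcal{A})$ explicitly and then intersect them. First I would identify the center. A standard argument shows that $\mathcal{Z}(M_n(\mathcal{A}))$ consists exactly of the scalar matrices $aI_n$ with $a\in\mathcal{Z}(\mathcal{A})=\mathcal{A}$ (here $\mathcal{A}$ is commutative, so $\mathcal{Z}(\mathcal{A})=\mathcal{A}$). Concretely, if $X=(x_{ij})$ commutes with every matrix, then commuting with the matrix units $E_{kl}$ forces $X$ to be diagonal with a single repeated entry, and that entry must lie in $\mathcal{Z}(\mathcal{A})$; commutativity of $\mathcal{A}$ makes this the whole of $\mathcal{A}$. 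So $\mathcal{Z}(\mathcal{J})=\{aI_n:a\in\mathcal{A}\}$.

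Next I would characterize $[\mathcal{J},\mathcal{J}]$, or rather just the diagonal information I need: the key invariant is the \emph{trace}. Define $\operatorname{tr}:\mathcal{J}\to\mathcal{A}$ by $\operatorname{tr}(X)=\sum_{i=1}^n x_{ii}$. Since $\mathcal{A}$ is commutative, $\operatorname{tr}$ is a trace in the strong sense that $\operatorname{tr}([X,Y])=0$ for all $X,Y$, because $\operatorname{tr}(XY)=\sum_{i,j}x_{ij}y_{ji}=\sum_{i,j}y_{ji}x_{ij}=\operatorname{tr}(YX)$ — commutativity of $\mathcal{A}$ is exactly what lets the entries be reordered. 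Hence $[\mathcal{J},\mathcal{J}]\subseteq\ker\operatorname{tr}$.

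Now I combine the two. Suppose $Z\in\mathcal{Z}(\mathcal{J})\cap[\mathcal{J},\mathcal{J}]$. From the center computation, $Z=aI_n$ for some $a\in\mathcal{A}$, so $\operatorname{tr}(Z)=na$. From the commutator computation, $\operatorname{tr}(Z)=0$, giving $na=0$ in $\mathcal{A}$. The remaining point is that $na=0$ forces $a=0$; this holds because $\mathcal{A}$ is a unital algebra over $\mathbb{C}$, so $n$ is invertible as a scalar and $a=n^{-1}(na)=0$. Therefore $Z=0$, which is the claim.

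The step I expect to be most delicate is the trace argument's reliance on commutativity: with a noncommutative coefficient algebra $\operatorname{tr}$ would no longer kill commutators, and indeed the lemma can fail there, so I would emphasize that the cyclic reordering $x_{ij}y_{ji}=y_{ji}x_{ij}$ is the load-bearing use of the hypothesis. The only other thing to watch is the final divisibility by $n$, which is immediate over $\mathbb{C}$ but is the reason the statement needs an algebra over a field of characteristic zero rather than an arbitrary ring.
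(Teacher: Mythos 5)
Your proposal is correct and follows essentially the same route as the paper: both define the trace $\tau(A)=\sum_{i=1}^n a_{ii}$, use commutativity of $\mathcal{A}$ to show $\tau$ annihilates $[\mathcal{J},\mathcal{J}]$, identify the center with scalar matrices $aI_n$, and conclude from $na=0$ that $a=0$. Your write-up merely fills in the details the paper leaves as ``not difficult to verify,'' such as the cyclic-reordering argument and the invertibility of $n$ over $\mathbb{C}$.
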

\begin{proof}
Let $\{e_{i,j}\}_{i,j=1}^{n}$ be the system of matrix units in $M_n(\mathcal{A})$.
Then for every element $A$ in $\mathcal{J}$, we have that $A=\sum_{i,j=1}^{n}a_{ij}e_{ij}$,
where $a_{ij}\in\mathcal{A}$.

Define a linear mapping $\tau$ from $\mathcal{J}$ into $\mathcal{A}$ by
$\tau(A)=\sum_{i=1}^{n}a_{ii}$ for every $A=\sum_{i,j=1}^{n}a_{ij}e_{ij}\in\mathcal{J}$.
Since $\mathcal{A}$ is  commutative, it is not difficult to verify that $\tau([A,B])=0$ for each $A$ and $B$ in $\mathcal{J}$.

It should be noticed that $\mathcal{Z}(\mathcal{J})=\{A:~A=\sum_{i=1}^{n}ae_{ii},~a\in\mathcal{A}\}$.
Suppose that $A=\sum_{i=1}^{n}ae_{ii}$ is an element in $\mathcal{Z}(\mathcal{J})\cap[\mathcal{J},\mathcal{J}]$,
then by the definition of $\tau$, we have that $\tau(A)=na$ and $\tau(A)=0$.
It implies that $A=0$.
\end{proof}

\begin{lemma}\label{04}
Suppose that $\mathcal{A}=\prod_{i\in\Lambda}\mathcal{A}_i$.
If $\mathcal{Z}(\mathcal{A}_i)\cap [\mathcal{A}_i,\mathcal{A}_i]=\{0\}$ for every $i\in\Lambda$,\\
then we have that $\mathcal{Z}(\mathcal{A})\cap [\mathcal{A},\mathcal{A}]=\{0\}$.
\end{lemma}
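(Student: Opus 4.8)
The plan is to exploit the fact that, in a direct product, both the center and the commutator space are governed coordinatewise. I would first fix notation: write each element of $\mathcal{A}=\prod_{i\in\Lambda}\mathcal{A}_i$ as a tuple $A=(A_i)_{i\in\Lambda}$ with $A_i\in\mathcal{A}_i$, and let $\pi_i:\mathcal{A}\to\mathcal{A}_i$ denote the coordinate projection. The crucial structural facts are that each $\pi_i$ is a unital algebra homomorphism and that multiplication in $\mathcal{A}$ is performed componentwise. The first consequence I would record is that the center splits as a product, namely $\mathcal{Z}(\mathcal{A})=\prod_{i\in\Lambda}\mathcal{Z}(\mathcal{A}_i)$; indeed, an element $A$ commutes with every $B\in\mathcal{A}$ if and only if $A_i$ commutes with every $B_i\in\mathcal{A}_i$ for each $i$, since one may vary the coordinates of $B$ independently.

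Next I would control the commutator space. Because $\pi_i$ is an algebra homomorphism, it sends commutators to commutators, so $\pi_i([A,B])=[\pi_i(A),\pi_i(B)]=[A_i,B_i]\in[\mathcal{A}_i,\mathcal{A}_i]$. Since $[\mathcal{A},\mathcal{A}]$ is by definition the linear span of commutators and $\pi_i$ is linear, it follows that $\pi_i([\mathcal{A},\mathcal{A}])\subseteq[\mathcal{A}_i,\mathcal{A}_i]$ for every $i\in\Lambda$. In other words, every element $C\in[\mathcal{A},\mathcal{A}]$ has each of its coordinates $C_i$ lying in $[\mathcal{A}_i,\mathcal{A}_i]$.

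Combining the two observations finishes the argument. Suppose $C\in\mathcal{Z}(\mathcal{A})\cap[\mathcal{A},\mathcal{A}]$. From membership in $\mathcal{Z}(\mathcal{A})$ and the product decomposition of the center, $C_i\in\mathcal{Z}(\mathcal{A}_i)$ for each $i$; from membership in $[\mathcal{A},\mathcal{A}]$ and the projection inclusion, $C_i\in[\mathcal{A}_i,\mathcal{A}_i]$ for each $i$. Hence $C_i\in\mathcal{Z}(\mathcal{A}_i)\cap[\mathcal{A}_i,\mathcal{A}_i]=\{0\}$ by hypothesis, so $C_i=0$ for all $i$ and therefore $C=0$. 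The only point that demands a little care is the distinction between a single commutator and a general element of $[\mathcal{A},\mathcal{A}]$, which is a finite sum of commutators; this is handled cleanly by the linearity of $\pi_i$, so I do not expect any genuine obstacle beyond bookkeeping.
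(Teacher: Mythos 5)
Your proposal is correct and follows essentially the same route as the paper: project a central element of $[\mathcal{A},\mathcal{A}]$ to each coordinate, observe that each coordinate lies in $\mathcal{Z}(\mathcal{A}_i)\cap[\mathcal{A}_i,\mathcal{A}_i]$, and apply the hypothesis. The paper states the coordinatewise membership without justification, whereas you supply the (routine) verification via the projections $\pi_i$ being unital homomorphisms and linearity handling finite sums of commutators; this is just a more detailed write-up of the identical argument.
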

\begin{proof}
Let $A=\{a_i\}_{i\in\Lambda}$ be an element in $\mathcal{Z}(\mathcal{A})\cap [\mathcal{A},\mathcal{A}]$.
Then for every $i\in\Lambda$, we have that $a_i\in\mathcal{Z}(\mathcal{A}_i)\cap [\mathcal{A}_i,\mathcal{A}_i]$.
By assumption, it follows that $a_i=0$. Hence $A=0$.
\end{proof}

\begin{lemma}\label{05}
Suppose that $\mathcal{M}$ is a type $\mathrm{I}_{\infty}$ von Neumann algebra. Then $LS(\mathcal{M})=[LS(\mathcal{M}),LS(\mathcal{M})]$.
\end{lemma}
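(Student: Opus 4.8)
The plan is to reduce to the homogeneous case and then exploit the infinite system of matrix units that a type $\mathrm{I}_{\infty}$ algebra carries, turning the statement into a commutator computation of Eilenberg-swindle type. First I would use the central (multiplicity) decomposition $\mathcal{M}=\overline{\bigoplus}_{\kappa}\mathcal{M}_{\kappa}$, where each $\mathcal{M}_{\kappa}$ is homogeneous of type $\mathrm{I}_{\kappa}$ with $\kappa$ an infinite cardinal. Since $LS$ turns such a direct sum into the corresponding product and commutators are computed coordinatewise, it is enough to show that every element of each $LS(\mathcal{M}_{\kappa})$ is a sum of a uniformly bounded number (in fact two) of commutators; the uniform bound is what lets me reassemble a general coordinate family into finitely many commutators in the product.

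Fix a homogeneous summand $\mathcal{N}_{0}:=\mathcal{M}_{\kappa}$. As $\kappa$ is infinite I can partition a family of $\kappa$ orthogonal equivalent abelian projections into countably many subfamilies, each again of cardinality $\kappa$; summing inside the subfamilies produces orthogonal projections $q_{1},q_{2},\dots$ with $\sum_{n}q_{n}=1$ and $q_{n}\sim 1$ for all $n$. Choosing isometries $s_{n}\in\mathcal{N}_{0}$ with $s_{n}^{*}s_{n}=1$ and $s_{n}s_{n}^{*}=q_{n}$, I obtain matrix units $s_{i}s_{j}^{*}$, a bounded shift $W=\sum_{n}s_{n+1}s_{n}^{*}$ with $W^{*}W=1$ and $WW^{*}=1-q_{1}$, and a unital $*$-endomorphism $\rho(x)=\sum_{n}s_{n}xs_{n}^{*}$ of $LS(\mathcal{N}_{0})$. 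Two computations then furnish the basic building blocks. For $i\neq j$ one has $q_{i}xq_{j}=[\,q_{i}xq_{j},\,q_{j}\,]$, so every off-diagonal block is a single commutator; and the swindle identity $s_{1}xs_{1}^{*}=\rho(x)-W\rho(x)W^{*}=[\,W^{*},\,W\rho(x)\,]$ shows every corner element of $q_{1}LS(\mathcal{N}_{0})q_{1}$ is a single commutator. Taking $x=1$ gives $q_{1}=[W^{*},W]$, and the halving identity $1=[v^{*},v]+[w^{*},w]$ (for isometries $v,w$ with orthogonal ranges summing to $1$) exhibits the unit as a sum of two commutators.

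To finish I would transfer the bounded statement $\mathcal{N}_{0}=[\mathcal{N}_{0},\mathcal{N}_{0}]$ (the properly infinite case invoked in Theorem \ref{02}) to $LS(\mathcal{N}_{0})$. Peeling the three blocks attached to $q_{1}$ only reduces a general $x$ to $q_{1}^{\perp}xq_{1}^{\perp}$ and, iterated, leaves an infinite diagonal tail, so a finite bound cannot come from block-peeling alone; instead one reruns the classical ``sum of two commutators'' construction for a properly infinite algebra. Its decisive feature is that the conjugating operators can be taken to be fixed bounded partial isometries and shifts drawn from $\{s_{n}\}$ (independent of the element being decomposed), the element entering only multiplicatively through $x$ and $\rho(x)$. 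Consequently the same formulas continue to make sense for unbounded $x\in LS(\mathcal{N}_{0})$, producing a decomposition of $x$ into two commutators whose conjugators are bounded and whose payloads are built from $x$.

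I expect the main obstacle, and the technical heart, to be the local-measurability bookkeeping for these unbounded ingredients, since the centre of $\mathcal{N}_{0}$ coincides with that of its abelian base and hence does not cut the infinite matrix direction indexed by $n$. On the one hand this is what dooms the naive shortcut: the telescoping operator $\sum_{n}\bigl(\sum_{j\le n}d_{j}\bigr)s_{n}s_{n+1}^{*}$ that would realise a diagonal $\mathrm{diag}(d_{1},d_{2},\dots)$ as a single commutator has unbounded partial sums and is not locally measurable, so the diagonal core genuinely forces the two-commutator construction. On the other hand it is also what rescues the argument: because each fibre of $\mathcal{N}_{0}$ is a type $\mathrm{I}_{\infty}$ factor, whose algebra of measurable operators coincides with the factor itself, every element of $LS(\mathcal{N}_{0})$ becomes bounded after a suitable central cut, and $\rho$ respects such cuts; this is precisely what I must verify to conclude that $\rho(x)$ and the block ampliations again lie in $LS(\mathcal{N}_{0})$. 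Establishing these memberships, and checking that the number of commutators stays uniformly bounded over the homogeneous summands, then delivers exactly condition $(5)$ of Lemma \ref{01}.
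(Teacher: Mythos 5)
Your proposal is correct in outline but follows a genuinely different, and considerably longer, route than the paper. The paper never decomposes $\mathcal{M}$ into homogeneous summands and never touches matrix units, shifts, or ampliations: it quotes the Muratov--Chilin central extension theorem \cite{M. Muratov1}, by which every $x\in LS(\mathcal{M})$, for $\mathcal{M}$ of type $\mathrm{I}_\infty$, splits as $x=\sum_{n}z_nx$ with $\{z_n\}$ mutually orthogonal central projections summing to $I$ and each $z_nx$ \emph{bounded}; it then writes each bounded piece as $z_nx=\sum_{i=1}^{k}[a_i^n,b_i^n]$ using $\mathcal{M}=[\mathcal{M},\mathcal{M}]$ for properly infinite von Neumann algebras, and glues by setting $s_i=\sum_n z_na_i^n$, $t_i=\sum_n z_nb_i^n\in LS(\mathcal{M})$, so that $x=\sum_{i=1}^k[s_i,t_i]$. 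Two remarks on how your plan relates to this. First, the uniform bound on the number of commutators that you rightly insist on is also needed by the paper, but across the central pieces $z_nx$ rather than across homogeneous summands; it comes from the bounded theory, not from rerunning the swindle. Second, and more importantly, the ``local-measurability bookkeeping'' you identify as the technical heart --- that every element of $LS(\mathcal{N}_0)$ becomes bounded after a suitable central cut, which is what you need to conclude $\rho(x)\in LS(\mathcal{N}_0)$ --- \emph{is} the central extension theorem above; your fibrewise justification (each fibre is a type $\mathrm{I}_\infty$ factor with $S(B(H))=B(H)$) is a heuristic, not a proof, and making it rigorous essentially amounts to proving that theorem. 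Once that fact is in hand, your unbounded swindle becomes redundant: one can apply the bounded result to each bounded central piece and glue, which is exactly the paper's short argument. So your route is viable --- the commutator identities you write are all correct, and the ``fixed bounded conjugators, multiplicative payload'' transfer principle does work --- but its one nontrivial ingredient coincides with the paper's key citation, and modulo that ingredient the paper's proof is strictly simpler.
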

\begin{proof}
By \cite{M. Muratov1}, we know that for every $x$ in $LS(\mathcal{M})$, there exists a sequence $\{z_n\}$ of mutually orthogonal central projections in $\mathcal{M}$ with $\sum_{n=1}^{\infty}z_n=I$,
such that $x=\sum_{n=1}^{\infty}z_nx$, and $z_nx\in\mathcal{M}$ for every $n\in\mathbb{N}$.
Since $\mathcal{M}$ is a proper infinite von Neumann algebra, it is well known that $\mathcal{M}=[\mathcal{M},\mathcal{M}]$.
Thus we have that $z_nx=\sum_{i=1}^{k}[a^{n}_i,b^{n}_i]$, where $a^{n}_i,b^{n}_i\in\mathcal{M}$ for each $n$ and $i$.

Set $s_i=\sum_{n=1}^{\infty}z_na^{n}_i$ and $t_i=\sum_{n=1}^{\infty}z_nb^{n}_i$.
By the definition of locally measurable operators, it is easy to show that $s_i$ and $t_i$ are two elements in $LS(\mathcal{M})$.

Since that $\{z_n\}$ are mutually orthogonal central projections, we can obtain that
$$[s_i,t_i]=[\sum_{n=1}^{\infty}z_na^{n}_i,\sum_{n=1}^{\infty}z_nb^{n}_i]=\sum_{n=1}^{\infty}z_n[a^{n}_i,b^{n}_i],$$
moreover, we have that
$$\sum_{i=1}^{k}[s_i,t_i]=\sum_{i=1}^{k}\sum_{n=1}^{\infty}z_n[a^{n}_i,b^{n}_i]=\sum_{n=1}^{\infty}z_n(\sum_{i=1}^{k}[a^{n}_i,b^{n}_i])
=\sum_{n=1}^{\infty}z_nx=x.$$
It follows that $x\in[LS(\mathcal{M}),LS(\mathcal{M})]$.
\end{proof}

In the following we show the main result of this section.

\begin{theorem}\label{06}
Suppose that $\mathcal{M}$ is a type $\mathrm{I}$ von Neumann algebra with an atomic lattice of projections.
Then every local Lie derivation from $LS(\mathcal{M})$ into itself is a Lie derivation.
\end{theorem}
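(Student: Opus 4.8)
The plan is to reduce Theorem \ref{06} to an application of Lemma \ref{01}, exactly as Theorem \ref{02} reduced the von Neumann case. So I must decompose $\mathcal{M}$ (and hence $LS(\mathcal{M})$) into a finite part and a properly infinite part, verify that $LS(\mathcal{M})$ of the finite part satisfies condition (4), verify that the infinite part satisfies condition (5) via Lemma \ref{05}, and confirm that conditions (1)--(3) are available for $LS(\mathcal{M})$ from the literature cited in the introduction. First I would write $\mathcal{M}=\mathcal{M}_{fin}\oplus\mathcal{M}_{\infty}$ where $\mathcal{M}_{fin}$ is type $\mathrm{I}_{fin}$ and $\mathcal{M}_{\infty}$ is type $\mathrm{I}_{\infty}$; since this is a central decomposition, $LS(\mathcal{M})=LS(\mathcal{M}_{fin})\oplus LS(\mathcal{M}_{\infty})$, so I set $\mathcal{A}_1=LS(\mathcal{M}_{fin})$ and $\mathcal{A}_2=LS(\mathcal{M}_{\infty})$.

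For condition (5), Lemma \ref{05} directly gives $\mathcal{A}_2=[\mathcal{A}_2,\mathcal{A}_2]$. For condition (4), I would exploit the atomic-lattice hypothesis together with the type $\mathrm{I}$ structure theory: a type $\mathrm{I}_{fin}$ von Neumann algebra with atomic center-splitting is a direct product $\prod_{n} M_n(\mathcal{Z}_n)$ of homogeneous pieces, where each $\mathcal{Z}_n$ is abelian. Passing to locally measurable operators, $LS(M_n(\mathcal{Z}_n))\cong M_n(LS(\mathcal{Z}_n))$ and $LS(\mathcal{Z}_n)$ is again a commutative algebra. Then Lemma \ref{03} shows each factor $M_n(LS(\mathcal{Z}_n))$ has $\mathcal{Z}\cap[\cdot,\cdot]=\{0\}$, and Lemma \ref{04} lifts this to the product, giving $\mathcal{Z}(\mathcal{A}_1)\cap[\mathcal{A}_1,\mathcal{A}_1]=\{0\}$. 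This is precisely why Lemmas \ref{03}, \ref{04} and \ref{05} were isolated beforehand: they are the three ingredients feeding conditions (4) and (5).

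Conditions (1)--(3) I would cite rather than reprove: standardness of Lie derivations on $LS(\mathcal{M})$ follows from \cite{V. Chilin}; innerness of derivations on $LS(\mathcal{M})$ for the type $\mathrm{I}$ case (under the $\mathcal{Z}(\mathcal{M})$-linearity that holds here) from \cite{Albeverio1, Albeverio2}; and the local-derivation-implies-derivation statement from \cite{D. Hadwin} once abelian summands are controlled. Here the atomic-lattice hypothesis is again what guarantees we stay inside the regime where these theorems apply, so I would check each citation's hypotheses against the decomposition above before invoking Lemma \ref{01}.

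\textbf{The main obstacle} will be condition (2): the cited innerness results for derivations on $LS(\mathcal{M})$ are conditional (inner $\iff$ $\mathcal{Z}(\mathcal{M})$-linear, or require the $\mathrm{I}_\infty$ hypothesis), whereas Lemma \ref{01} demands that \emph{every} derivation on $\mathcal{A}=\mathcal{A}_1\oplus\mathcal{A}_2$ be inner outright. On the finite homogeneous pieces $M_n(LS(\mathcal{Z}_n))$ a derivation need not be inner unless one first argues it is automatically $\mathcal{Z}$-linear; so the real work is to show that the atomicity assumption forces every derivation to be $\mathcal{Z}(\mathcal{M})$-linear (equivalently, that there are no exotic derivations coming from the abelian centers $LS(\mathcal{Z}_n)$, cf. the Ber--Chilin--Sukochev discontinuous derivation on $L^0(0,1)$). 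I expect this to require either strengthening Lemma \ref{01} to allow a weaker hypothesis in place of (2), or an explicit argument that on an atomic type $\mathrm{I}$ algebra the center $LS(\mathcal{Z}(\mathcal{M}))$ is a product of fields $\mathbb{C}$ on which no nonzero derivation exists, thereby collapsing the problematic non-inner derivations.
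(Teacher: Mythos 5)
Your proposal follows essentially the same route as the paper: the same central decomposition $\mathcal{M}=\mathcal{M}_1\oplus\mathcal{M}_2$ with $LS(\mathcal{M})\cong LS(\mathcal{M}_1)\oplus LS(\mathcal{M}_2)$, conditions (4) and (5) verified through Lemmas \ref{03}, \ref{04} and \ref{05} exactly as you describe, and conditions (1) and (3) obtained from the same citations (the abelian summands in (3) handled by the atomic case of \cite[Theorem 3.8]{Albeverio3}). The one point you flag as the main obstacle, condition (2), is precisely where the paper cites \cite[Corollary 5.12]{Albeverio2}, which asserts that every derivation on $LS(\mathcal{M})$ is inner when the lattice of projections is atomic; this is exactly the resolution you anticipated (atomicity eliminates the exotic central derivations), so no strengthening of Lemma \ref{01} is required.
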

\begin{proof}
By \cite[Theorem 6.5.2]{R.Kadison J. Ringrose}, we know that $\mathcal{M}=\mathcal{M}_1\bigoplus\mathcal{M}_2$,
where $\mathcal{M}_1$ is a type $\mathrm{I}_{finite}$ von Neumann algebra
and $\mathcal{M}_2$ is a type $\mathrm{I}_{\infty}$ von Neumann algebra.
Hence by \cite[Proposition 1.1]{Albeverio2}, we have that $LS(\mathcal{M})\cong LS(\mathcal{M}_1)\bigoplus LS(\mathcal{M}_2)$.

In the following we will verify the conditions (1) to (5) in Lemma \ref{01} one by one.

By \cite[Theorem 1]{V. Chilin}, we know that every Lie derivation on $LS(\mathcal{M})$ is standard;
by \cite[Corollary 5,12]{Albeverio2}, we know that every derivation on $LS(\mathcal{M})$ is inner for a von Neumann
algebra with atomic lattice of projections.

It is proved in \cite{HadwinLi1} that every local derivation on $LS(\mathcal{M})$ is a derivation
for a von Neumann algebra without abelian direct summands.
While for an abelian von Neumann algebra with atomic lattice of projections,
by \cite[Theorem 3.8]{Albeverio3} we know that every local derivation on $LS(\mathcal{M})$ is a derivation.
Associated the two results, we can obtain each local derivation on $LS(\mathcal{M})$ is a derivation
for a von Neumann algebra with atomic lattice of projections.

Since $\mathcal{M}_1$ is a type $\mathrm{I}_{finite}$ von Neumann algebra,
we know that $\mathcal{M}_1=\bigoplus_{n=1}^{\infty}\mathcal{A}_n$,
where each $\mathcal{A}_n$ is a homogenous type $\mathrm{I}_n$ von Neumann algebra.
Hence $LS(\mathcal{M}_1)\cong\prod_{n=1}^{\infty}LS(\mathcal{A}_n)$.
Since $\mathcal{A}_n$ is a homogenous type $\mathrm{I}_n$ von Neumann algebra,
by \cite{Albeverio2} we know that $LS(\mathcal{A}_n)\cong M_n(\mathcal{Z}(LS(\mathcal{A}_n)))$.
By Lemmas \ref{03} and \ref{04}, we know that the condition (4) in Lemma \ref{01} holds.
And by Lemma \ref{05}, the condition (5) in Lemma \ref{01} holds.
\end{proof}

\bibliographystyle{amsplain}

\end{document}